\title[Markov degree two]{The three-state toric homogeneous Markov chain model has Markov degree two}
\author{Patrik Nor\'en}
\address{Department of Mathematics and Systems Analysis, Aalto University}
\email{patrik.noren@aalto.fi}
\theoremstyle{plain}
\newtheorem{theorem}{Theorem}[section]
\newtheorem{lemma}[theorem]{Lemma}
\newtheorem{proposition}[theorem]{Proposition}
\newtheorem*{theoremNoNumber}{Theorem}
\newtheorem*{conjectureNoNumber}{Conjecture}
\theoremstyle{definition}
\newtheorem{definition}[theorem]{Definition}
\newtheorem{ex}[theorem]{Example}
\newtheorem{re}[theorem]{Remark}
\begin{document}
\begin{abstract}
We prove that the three-state toric homogenous Markov chain model has Markov degree two. In algebraic terminology this means, that a certain class of toric ideals are generated by quadratic binomials. This was conjectured by Haws, Martin del Campo, Takemura and Yoshida, who proved that they are generated by binomials of degree six or less.
\end{abstract}
\maketitle
{\bf Keywords:} Algebraic Statistics, Markov bases, Markov chains.

\section{Introduction}
This paper considers the $S$-state toric homogeneous Markov chain model. Let $S$ and $T$ be positive integers, set
\[R_{S,T}=\mathbb{K}[x_w\mid\textrm{$w$ is a $T$-letter word $i_1\ldots i_T$ on the alphabet $[S]$ with $i_j\neq i_{j+1}$}],\]
and define the \emph{$S$-state toric homogeneous Markov $T$--chain ideal}, $I_{S,T}$, as the kernel
of the ring homomorphism
\[\Phi_{S,T}:R_{S,T}\rightarrow R_{S,2}\] 
given by $\Phi(x_{i_1\ldots i_T})=x_{i_1i_2}x_{i_2i_3}\cdots x_{i_{T-1}i_T}.$ The \emph{Markov degree} of an ideal is the smallest degree of a generating set, and the \emph{Gr\"obner degree} is the smallest degree of a Gr\"obner basis. The degree of a generating set or Gr\"obner basis is the highest degree of an element in the generating set or Gr\"obner basis.

\begin{theoremNoNumber}[Haws, Martin del Campo, Takemura and Yoshida, \cite{rudy1}, \cite{rudy2}]
The Markov degree of the three-state model ideal is at most six.
\end{theoremNoNumber}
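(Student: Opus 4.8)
The plan is to recast the question combinatorially and induct on the size of the fiber. Identify the variable $x_{i_1\cdots i_T}$ with the directed walk $w$ of length $\ell:=T-1$ on the complete loopless digraph $\vec K_3$ (vertex set $\{1,2,3\}$, all six arcs) whose $j$-th step is the arc $(i_j,i_{j+1})$. A degree-$d$ monomial $\prod_w x_w^{c_w}$ corresponds to a multiset $\mathcal W$ of $d$ such walks, and $\Phi_{3,T}$ sends it to $\prod_e x_e^{n_e(\mathcal W)}$, where $n_e(\mathcal W)$ counts occurrences of the arc $e$ among the walks of $\mathcal W$. Thus two degree-$d$ monomials lie in a common fiber of $\Phi_{3,T}$ exactly when the two multisets of walks use each arc of $\vec K_3$ the same total number of times, and the theorem asserts that a finite set of moves, each exchanging at most six walks for at most six walks with the same combined arc multiset, connects all such fibers.

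I would work with two elementary moves that always lie in $I_{3,T}$: the degree-one moves $x_w-x_{w'}$ for walks $w,w'$ carrying the same arc multiset, and the degree-two \emph{tail swap} — if walks $w=(i_1\cdots i_T)$ and $w'=(i'_1\cdots i'_T)$ sit at the same vertex at the same time, $i_p=i'_p$, then $\{w,w'\}$ may be replaced by the admissible pair $\{(i_1\cdots i_p\,i'_{p+1}\cdots i'_T),\,(i'_1\cdots i'_p\,i_{p+1}\cdots i_T)\}$, which has the same total arc multiset. Connectivity would then be proved by induction on $d$: given two multisets $\mathcal W,\mathcal W'$ of $d$ walks with equal arc multiset, use tail swaps and degree-one moves to bring some walk of $\mathcal W$ into coincidence with a walk of $\mathcal W'$, cancel it, and apply the inductive hypothesis, the base case $d=1$ amounting to a finite check on when two length-$\ell$ walks have the same arc multiset. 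An outer induction on $T$, via a prolongation argument extending each degree-$\le 6$ move for $I_{3,T}$ to one for $I_{3,T+1}$, can be layered on top so that even verifying the numeric bound $6$ is reduced to finitely many $T$.

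The crux, and the main obstacle, is controlling the degree. Synchronizing a walk of $\mathcal W$ with a walk of $\mathcal W'$ generally cannot be done by a single tail swap: one must first liberate the correct arcs at the correct positions, which may require a relay of tail swaps routing arcs through several intermediate walks, and the content of the theorem is precisely that over $\vec K_3$ such a relay never involves more than six walks at once. Having only three vertices keeps the cycles short — the only cycles are the $2$-cycles $1\!\to\!2\!\to\!1$ and the $3$-cycles $1\!\to\!2\!\to\!3\!\to\!1$ — but the requirement that every walk retain length exactly $\ell$ couples the walks globally, and it is this length bookkeeping that pushes the count above two. Carrying out the ensuing finite case analysis cleanly — equivalently, making the prolongation step sharp enough to reduce everything to a bounded Markov-basis computation — is where the real effort goes; a more careful version of the same analysis, isolating exactly which relays are unavoidable, is presumably what allows the present paper to improve ``six'' to ``two''.
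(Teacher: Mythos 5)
This statement is quoted from Haws--Mart\'in del Campo--Takemura--Yoshida; the paper does not prove it but cites \cite{rudy1}, \cite{rudy2}, and then proves the stronger degree-two bound by a different route (decomposing state graphs into two-cycles, triangles and a leftover path, defining a normal form for monomials, and exhibiting explicit quadratic moves between normal forms). Your setup is sound as far as it goes: the dictionary between monomials and multisets of walks on $\vec K_3$, the fact that two monomials lie in the same fiber iff their arc multisets agree, and the validity of the degree-one relabelings and degree-two tail swaps as elements of $I_{3,T}$ are all correct, and they are essentially the same raw material the paper works with.

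The gap is that the quantitative content of the theorem --- the bound six --- is never established. Your induction on $d$ hinges on the claim that some walk of $\mathcal W$ can be synchronized with a walk of $\mathcal W'$ by a relay involving at most six walks at a time, and you explicitly defer this to a ``finite case analysis'' that is not carried out; but that case analysis \emph{is} the theorem, and nothing in the proposal bounds the length of a relay or even shows that tail swaps plus relabelings suffice to connect the fibers at all (the moves actually needed, as the present paper's Lemma 3.1 shows, include ones that reassemble the union of two state graphs into walks not obtained by cutting at a common time, e.g.\ trading a pair of oppositely oriented triangles for three two-cycles split across two variables). The ``outer induction on $T$ via a prolongation argument'' is likewise only asserted: there is no given construction extending a degree-$\le 6$ Markov basis of $I_{3,T}$ to one of $I_{3,T+1}$, and without it the problem does not reduce to finitely many $T$. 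As written, the proposal is a plausible plan of attack rather than a proof; to complete it you would need either to carry out the relay analysis with an explicit bound, or to follow the structural route of the paper (or of \cite{rudy1}, \cite{rudy2}), which replaces walk-level surgery by bookkeeping on cycle--path decompositions and spin invariants.
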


\begin{conjectureNoNumber}[\cite{rudy1}, \cite{rudy2}]
For $S>2$, the $S$-state model ideal has Markov degree $S-1$ and Gr\"obner degree $S$.
\end{conjectureNoNumber}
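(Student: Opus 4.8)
The plan is to translate the problem into combinatorics on directed walks and then bound separately the smallest degree needed for a generating set and for a Gr\"obner basis. First I would fix the combinatorial model. A degree-$d$ monomial of $R_{S,T}$ is a multiset of $d$ words, each a walk of length $T-1$ on the complete loopless directed graph $K_S^\circ$ on vertex set $[S]$, and $\Phi_{S,T}$ records only the total multiset of traversed directed edges. Thus $I_{S,T}$ is the toric ideal whose fibers are the collections of $d$ walks sharing a common edge-usage vector, and a binomial of degree $d$ is a pair of such collections. The Markov degree is then the maximal number of walks that an indecomposable move must alter simultaneously, and the Gr\"obner degree measures the same quantity for the syzygies produced in Buchberger's algorithm.

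For the upper bound on the Markov degree I would introduce a family of \emph{rotation moves}: whenever up to $S-1$ walks pairwise meet at common vertices at common time indices, one may cyclically permute their suffixes, yielding a binomial of degree at most $S-1$. The core claim is that these moves connect any two walk-collections in a common fiber. I would prove this by induction on $T$ together with a complexity measure such as the lexicographic distance to a canonical fiber representative, peeling off a first edge and using a pigeonhole argument on the $S$ vertices to guarantee that a meeting involving at most $S-1$ walks is always available to reduce the complexity. The case $S=3$, where degree two already suffices, is exactly the theorem established above and would serve as the base of the induction.

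For the Markov-degree lower bound I would exhibit, for each $S>2$ and suitably large $T$, a fiber containing precisely two walk-collections that differ by a genuine $(S-1)$-cycle of suffix rotations among walks touching all $S$ states, and show that the corresponding binomial is \emph{indispensable}: it lies in every minimal generating set because its fiber has exactly two elements, so no lower-degree relation can express it. This forces the Markov degree to be at least $S-1$, matching the upper bound. The Gr\"obner-degree upper bound $S$ would follow by fixing an elimination/lexicographic term order adapted to the earliest meeting positions and checking that every $S$-polynomial of the degree-$(S-1)$ generators reduces through elements of degree at most $S$, the extra unit arising from the overlap of two rotation moves sharing a single walk; the lower bound $S$ would require producing, for \emph{every} term order, a standard-monomial staircase obstruction that forces a degree-$S$ element into the reduced Gr\"obner basis.

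The main obstacle is the upper bound on the Markov degree for general $S$, namely the inductive rotation-move argument, which is precisely the content that the present work settles only in the first nontrivial case $S=3$. As $S$ grows, meetings of walks need not decompose into low-degree swaps, and long-range obstructions to rearrangement appear; controlling these uniformly, rather than case by case, is the crux. The uniform-over-all-orders lower bound for the Gr\"obner degree is a close second in difficulty, since it must exclude every possible term order simultaneously.
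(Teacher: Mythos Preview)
The statement you are trying to prove is a \emph{conjecture}, not a theorem, and the paper does not prove it; the paper proves only the $S=3$ case of the Markov-degree assertion (Theorem~3.7). More importantly, the paper itself records in a remark that the conjecture is \emph{false} in general: $I_{4,4}$ has Markov degree four, witnessed by the indispensable binomial
\[
x_{1414}x_{2323}x_{4142}x_{4232}-x_{1423}^{2}x_{3232}x_{4141}.
\]
Since $S-1=3$ for $S=4$, your proposed upper bound on the Markov degree cannot hold, and therefore the ``rotation moves of degree at most $S-1$'' that you intend to use as a Markov basis cannot connect every fiber. The pigeonhole argument you sketch---that among at most $S-1$ walks one can always find a meeting that permits a suffix rotation reducing some complexity measure---must break down, and the counterexample shows exactly where: the two sides of the binomial above live alone in their fiber, so no move involving fewer than four walks can relate them.

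Consequently there is no repair of the outline that will yield the conjectured bound $S-1$; the genuine content of the paper is that for $S=3$ one \emph{can} get by with quadratic moves, and the proof there is not by your inductive rotation scheme but by introducing a normal form for monomials via state-graph decompositions into two-cycles, triangles, and a leftover path, and then a delicate case analysis (Lemmas~3.1--3.6) using spin invariants to show any two normal monomials in the same fiber are connected by degree-two moves. Your plan for the Gr\"obner-degree part inherits the same defect on the lower-bound side, since the Markov-degree counterexample already forces the generating degree above $S-1$.
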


We prove the Markov part of the conjecture for the three-state model ideal by combinatorial arguments.
\\ $\,$

\noindent {\bf Theorem 3.7.} \emph{The Markov degree of the three-state model ideal is two.} \\

The $S$-state toric homogeneous Markov $T$--chain ideal is similar to the ideal of graph homomorphisms from the path of length $T$ to the complete graph on $S$ vertices. The following result motivates the belief that the structure of $I_{S,T}$ should be possible to understand.

\begin{theoremNoNumber}[Engstr\"om and Nor\'en \cite{EN}]
The ideal of graph homomorphisms from any forest to any graph has a square-free quadratic Gr\"obner basis.
\end{theoremNoNumber}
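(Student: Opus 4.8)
The plan is to translate the statement into a fiber-connectivity problem and solve it by separating two kinds of degree-two moves. By the standard correspondence between a toric ideal and the fibers of its monomial map (Diaconis--Sturmfels), $I_{3,T}$ is generated in degree $\le 2$ if and only if, for every element of the image of $\Phi_{3,T}$, the corresponding fiber is connected by moves of degree $\le 2$. Concretely, a point of a fiber is a multiset of $N$ words (walks of length $T$ on $K_3$, with consecutive letters distinct), and the fiber consists of all such multisets realizing a fixed total directed-edge count $m\in\mathbb{Z}_{\ge 0}^{6}$; note that $N=|m|/(T-1)$ is forced, so the fiber is indexed by $m$ alone. A degree-two move replaces two walks $u,v$ by two walks $u',v'$ with $\deg u+\deg v=\deg u'+\deg v'$. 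The cases $N\le 2$ are immediate (a single walk, or an entire pair, may be exchanged wholesale), so I may assume $N\ge 3$ and aim to connect any two decompositions of $m$ into $N$ equal-length walks.

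First I would record two families of explicit degree-two moves. The \emph{tail-swap} applies whenever two walks agree at some position $t$ (that is $u_t=v_t$): exchanging their suffixes $u_t\ldots u_T$ and $v_t\ldots v_T$ yields valid walks with the same combined edge-count. Writing a configuration as an $N\times T$ array whose rows are the walks, a tail-swap preserves not only $m$ but the finer \emph{column profile} $(c_1,\ldots,c_T)$, where $c_t$ is the multiset of symbols in column $t$. The second family consists of the \emph{crossing moves} available to pairs that never meet (i.e. $u_t\neq v_t$ for all $t$); the prototype is $x_{121}x_{212}-x_{121}^2$ together with its images under the cyclic and reflective symmetries of $K_3$. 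These moves change the column profile, and it is here that the three-state restriction is essential: a never-meeting pair occupies two of the three vertices at every time, so the induced `missing-vertex' dynamics are rigid enough to classify all re-decompositions of such a pair.

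The argument then splits into two connectivity claims. Claim one (\emph{sorting within a profile}): any two configurations with the same column profile are connected by tail-swaps; I would prove this by an exchange/induction argument on $T$, using agreements between rows to align suffixes one column at a time, much as equal-margin arrays are connected by swaps in the Markov-basis literature. Claim two (\emph{connecting profiles}): for a given $m$, all column profiles that occur are connected, up to tail-swaps, by crossing moves. The invariants help here, since $m$ determines the imbalances $c_1(a)-c_T(a)$ and the partial sums $\sum_{t<T}c_t(a)$ but leaves the individual $c_t(a)$ free; so I would fix a canonical profile (say the lexicographically least) and show that any non-canonical configuration contains a never-meeting pair to which a crossing move applies and which strictly decreases a profile potential, finishing with Claim one.

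I expect Claim two to be the main obstacle. The difficulty is not the abstract existence of crossing moves but their \emph{realizability}: to alter the profile I must exhibit, inside the current $N$-walk configuration, an actual never-meeting pair whose crossing move pushes the profile toward canonical, and I must ensure that these steps terminate and do not undo alignment achieved elsewhere. Controlling the interaction of the two move types -- producing a monovariant that the crossing moves strictly decrease while tail-swaps complete the reduction -- is the crux, and it is exactly the point where the cyclic combinatorics of $K_3$, as opposed to $K_S$ for $S\ge 4$, must be used in full.
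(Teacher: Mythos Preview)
Your proposal is not aimed at the stated theorem. The statement is the cited result of Engstr\"om and Nor\'en about the graph-homomorphism ideal from an \emph{arbitrary forest} to an \emph{arbitrary graph}, and it asserts the existence of a \emph{square-free quadratic Gr\"obner basis}. The paper does not prove this; it merely quotes it as motivation and remarks that its proof uses the toric fiber product. Your plan instead attacks the special ideal $I_{3,T}$ and only the weaker conclusion that it is generated in degree two. Note also that the paper explicitly says $I_{S,T}$ is only ``similar to'' the homomorphism ideal from the path to $K_S$; they are not the same ideal, so even the special case you treat is not literally an instance of the quoted theorem.

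If your intent was actually Theorem~3.7 (Markov degree two for $I_{3,T}$), then your outline is a genuinely different route from the paper's. The paper works through the \emph{state-graph decomposition} of each variable into two-cycles, uniformly oriented triangles, and a leftover path (Proposition~\ref{prop:basicgraph}), defines a \emph{normal form} for monomials, and then connects any two normal monomials in the same fiber by an exhaustive case analysis driven by the spin invariants $s_{ij}$. Your organization around column profiles with tail-swaps and ``crossing moves'' is a different bookkeeping, and your own final paragraph identifies the real gap: Claim two requires producing, inside an arbitrary configuration, a never-meeting pair whose crossing move strictly decreases a profile potential, and you have not supplied that monovariant or the existence argument. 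That is precisely the hard part; the paper's normal form and spin-vector inequalities are its substitute for such a potential, and they occupy the bulk of Lemmas~\ref{lemma:direct}--\ref{lemma:undirected}.
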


This theorem was proved using the toric fiber product. In this paper, we will use an adaption of that object, similar to those in \cite{E}, \cite{EKS} and \cite{ss}, for ideals that are not always toric fiber products right off. The results of this paper are about algebraic statistics, an area further surveyed in the book by Drton, Sturmfels, and Sullivant~\cite{DSS}.

\begin{re}
If the variables of $I_{3,4}$ are ordered as  (1212, 1213, 1231, 1232, 1312, 1313, 1323, 2121, 2123, 2131, 2132, 2313, 2321, 2323, 3121, 3131, 3132, 3212, 3231, 3232) and a monomial order is given by the weight vector (7607, 8235, 1610, 1124, 7287, 9850, 3760, 9582, 4608, 4517, 2483, 5589, 4347, 2143, 161, 4947, 8497, 3128, 272, 3276) then the Gr\"obner basis from the weight vector is quadratic, which is sharper than conjectured. The order was found by computer calculations~\cite{ME}. The weight vectors giving quadratic Gr\"obner bases are rare and that explains why they were not found before.
\end{re}
\begin{re}
The general the conjecture for Markov degrees  is false, for example $I_{4,4}$ has Markov degree four. An indispensable generator of this ideal of degree four is $x_{1414}x_{2323}x_{4142}x_{4232}-x^2_{1423}x_{3232}x_{4141}$.
\end{re}
\section{State graphs and normal monomials}
Let $P_T$ be the directed path on vertex set $[T]$ with edges $12,23,\ldots,(T-1)T$ and let $K_3$ be the directed complete graph on vertex set $[3]$. Each $T$-letter word $i_1\ldots i_T$ on the alphabet $[3]$ with $i_j\neq i_{j+1}$ encodes a graph homomorphism $P_T\rightarrow  K_3$, by sending vertex $j$ to $i_j$. A \emph{state graph} is a directed graph on vertex set $[3]$ with multiple edges allowed but no loops.

The variables of $R_{3,T}$ are indexed by graph homomorphisms $P_T\rightarrow K_3$ each of which induces a state graph. If $x$ and $y$ are two variables with the same state graph, then $x-y\in I_{3,T}$. When describing the relations between higher degree monomials, it is enough to have one variable for each state graph.

The state graphs will be decomposed into paths and cycles. The notation for a path is $ij$ or $ijk$ depending on its length, the notation for a two-cycle is $(ij)$, and the notation for triangles is $(ijk)$ (the cycle $(ijk)$ have the edges $ij,jk,ki$.) We keep careful track of orientation: $12$, $23$, $31$, $123$, $231$, $312,$ and $(123)$ are oriented one way and $13$, $21$, $32$, $132$, $213$, $321$, and $(132)$ are oriented the other way.

\begin{proposition}\label{prop:basicgraph}
The state graph $G$ of a variable can be uniquely decomposed into a collection of two-cycles, triangles with the same orientation, and potentially a leftover path oriented in the same way as the triangles.
\end{proposition}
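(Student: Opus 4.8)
The plan is to exploit the fact that the state graph $G$ is the edge multiset of a walk $i_1 i_2\cdots i_T$ in $K_3$, so $G$ is a directed multigraph on $[3]$ that carries an Eulerian trail; equivalently, its support is connected and the excess vector $\bigl(d^+(v)-d^-(v)\bigr)_{v\in[3]}$ is either $(0,0,0)$ or a permutation of $(1,0,-1)$. These are the only properties of $G$ I intend to use. First I would strip off two-cycles: for each pair $\{i,j\}$ remove $\min(a_{ij},a_{ji})$ copies of $(ij)$, where $a_{ij}$ is the multiplicity of the edge $ij$ in $G$. Call the result $G'$. Two-cycle removal preserves every $d^+(v)-d^-(v)$, so the excess vector of $G'$ is again $(0,0,0)$ or a permutation of $(1,0,-1)$, and now between any two vertices all edges of $G'$ point the same way.

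The decisive step is a coherence statement: \emph{the edges of $G'$ are either all ``clockwise'' — a sub-multiset of $\{12,23,31\}$ — or all ``counterclockwise'' — a sub-multiset of $\{21,32,13\}$.} Each vertex $v$ has two slots, one towards each other vertex, and in $G'$ each slot is wholly outgoing, wholly incoming, or empty. A vertex with two outgoing slots would have excess $\ge 2$, and one with two incoming slots excess $\le -2$; both are forbidden. Hence any vertex with two nonempty slots has one slot in and one out, and this forces the edge directions of $G'$ to circulate consistently around the triangle (when fewer than two pairs are nonempty there is nothing to prove). This is the step where the walk hypothesis is genuinely needed: for an arbitrary multigraph on three vertices the proposition fails, e.g.\ $\{12,13\}$ admits no such decomposition.

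Next I would decompose the coherent part. Say $G'$ is clockwise with multiplicities $(p,q,r)$ of $(12,23,31)$; the other orientation is symmetric. Its excess vector is $(p-r,\,q-p,\,r-q)$, so the Eulerian condition forces $(p,q,r)$ to be $(k,k,k)$ or a cyclic rotation of $(k+1,k,k)$ or of $(k+1,k+1,k)$. Put $t=\lfloor(p+q+r)/3\rfloor$. Removing $t$ copies of the triangle $(123)$ leaves multiplicities $(p-t,q-t,r-t)$ of total size $0$, $1$, or $2$, hence either nothing, one clockwise edge, or two distinct clockwise edges — in every case either nothing or a single clockwise path of length at most two. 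This decomposition is forced: from $p+q+r=3t'+\ell$ with $\ell\le 2$ the pair $(t',\ell)$ is determined, then the triangle part must be $t$ copies of $(123)$, and the leftover path is whatever remains. So $G'$ has a unique decomposition of the required form, and combining with the two-cycle strip gives existence for $G$.

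For uniqueness of the decomposition of $G$, take any decomposition $D$ of the allowed shape. If $D$ contains a triangle or a path, all its triangles and its path have one orientation, say clockwise; then none of those pieces uses an edge of $\{21,32,13\}$, so each such edge lies in a two-cycle, which forces $D$ to contain exactly $a_{21}$ copies of $(12)$, $a_{32}$ of $(23)$, and $a_{13}$ of $(13)$ — precisely the two-cycles removed in the strip. The rest of $D$ is then a decomposition of $G'$ into triangles and an optional clockwise path, unique by the previous step; moreover this case forces $a_{ij}\ge a_{ji}$ along the clockwise edges. If $D$ has no triangle and no path it is a union of two-cycles, so $a_{ij}=a_{ji}$ for every pair and $D$ must be $a_{ij}$ copies of $(ij)$ per pair. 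Since a clockwise-flavoured $D$ and a counterclockwise-flavoured one impose opposite inequalities between the $a_{ij}$, they can both occur only when $G$ is all two-cycles, where both coincide; hence $D$ is unique. The main obstacle is the coherence step of the second paragraph; the rest is bookkeeping governed by counting edges modulo three.
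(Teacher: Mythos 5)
Your proof is correct, but it runs in the opposite direction from the paper's. The paper starts from the cycle-decomposition theorem for Eulerian digraphs: it closes up the walk (adding an edge $ji$ when the endpoints differ), decomposes the resulting multigraph into directed cycles, repairs orientation conflicts by trading a pair of oppositely oriented triangles for three two-cycles, and finally deletes the added edge from a triangle or, failing that, a two-cycle; uniqueness is dispatched in a single sentence. You instead work bottom-up: strip the $\min(a_{ij},a_{ji})$ two-cycles first, prove the coherence lemma (the remainder circulates consistently around the triangle) directly from the excess vector $\bigl(d^+(v)-d^-(v)\bigr)$, and finish by counting edges modulo three. Both arguments ultimately rest on the same fact, the in/out-degree condition of a walk, but yours never invokes the cycle-decomposition theorem, applies verbatim to any loopless multigraph on $[3]$ whose excess vector is $(0,0,0)$ or a permutation of $(1,0,-1)$ (the connectivity you mention is never actually used), and, most usefully, supplies a genuine uniqueness argument --- including the observation that the two orientation flavours impose incompatible inequalities on the $a_{ij}$ --- where the paper offers only the assertion that opposite edges are ``locked into two-cycles'' and the rest packed into triangles. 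The price is length: the paper's decompose-and-repair argument gets existence in a few lines, whereas your slot analysis and multiplicity bookkeeping take longer. Your version is the one that actually substantiates the uniqueness claim.
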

\begin{proof}
It is a basic fact in graph theory that a directed graph with the same in and out degree for each vertex can be decomposed into cycles.

The state graph comes from a graph homomorphism $P_T\rightarrow K_3$. If the homomorphism sends $1$ and $T$ to the same vertex, then the state graph can be decomposed into directed cycles. If there are triangles with different orientations, then any two oppositely oriented triangles can be replaced by a triple of two-cycles.

If the graph homomorphism sends $1$ to $i$ and $T$ to $j$ with $i\neq j$, then add an extra edge $ji$ in $G$ to get $G'$. Decompose $G'$ as before into cycles, with triangles oriented the same way. Then remove $ji$ from a triangle if possible, and otherwise from a two-cycle, to achieve compatible orientation.

That the decomposition is unique follows from the fact that all edges with one orientation are locked into two-cycles and the leftover edges are put into as many triangles as possible.
\end{proof}

\begin{proposition}\label{prop}
Any collection of two-cycles, triangles, and at most one path with the same orientation as the the triangles; with in total $T-1$ edges, is a decomposition of the state graph of a variable.
\end{proposition}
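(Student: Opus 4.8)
The plan is to recognise this proposition as the classical statement that a directed multigraph has an Eulerian trail. Recall that a variable of $R_{3,T}$ is the same datum as a graph homomorphism $P_T\to K_3$, i.e. a walk $i_1\to i_2\to\cdots\to i_T$ of length $T-1$ in $K_3$, and the state graph it induces is precisely the multiset of edges $i_1i_2,i_2i_3,\ldots,i_{T-1}i_T$ traversed along the walk. So, given the collection in the statement, I would overlay its two-cycles, triangles and (possibly) single path into one directed multigraph $G$ on vertex set $[3]$ with $T-1$ edges, and it then suffices to produce an Eulerian trail of $G$, that is, a walk using every edge exactly once: reading off its sequence of vertices gives a word $i_1\ldots i_T$, automatically with $i_j\neq i_{j+1}$ since $K_3$ has no loops, hence a variable whose state graph is $G$. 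Since the decomposition in Proposition~\ref{prop:basicgraph} is unique and the collection we started from is already of that type (triangles all alike, path oriented to match), that unique decomposition must be our collection, so $G$ really is the state graph of a variable decomposed in the prescribed way.

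Next I would check the degree hypothesis of the directed Euler theorem. A two-cycle $(ij)$ and a triangle $(ijk)$ each raise the in-degree and the out-degree of every one of their vertices by the same amount, so all vertices of $G$ are balanced when the collection has no path. If it has a path $ij$ or $ijk$, then the first vertex of that path acquires one excess out-edge, the last vertex one excess in-edge, and any interior vertex stays balanced; hence $G$ has at most one vertex with out-degree exceeding in-degree by one and at most one with in-degree exceeding out-degree by one, exactly the required condition. Note this step never uses the ``same orientation'' hypotheses — those are only there to match the canonical form of Proposition~\ref{prop:basicgraph}.

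The remaining, and really the only substantive, point is that the non-isolated vertices of $G$ induce a connected subgraph; a priori a union of vertex-disjoint cycles could be disconnected, and it is the restriction to three states that rules this out. Each component of the collection spans at least two vertices — a two-cycle or a length-one path spans two, a triangle or a length-two path all three — and any two subsets of $\{1,2,3\}$ of size at least two must meet, so the vertex supports of the components pairwise overlap and their union is a connected subgraph of $K_3$. (The degenerate case is the empty collection, where $T=1$ and any one-letter word works.) With connectivity and the degree condition in hand, the classical existence theorem for Eulerian trails in directed multigraphs supplies the trail and the argument closes. I expect the only things needing care are the orientation bookkeeping in the degree count and the observation that the ``same orientation'' conditions serve not to make the trail exist but to ensure that the unique decomposition of the constructed variable is precisely the collection fed in.
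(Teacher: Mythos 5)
Your proof is correct, but it takes a genuinely different route from the paper. The paper proves this proposition by brute-force construction: it splits into four cases according to which path (if any) and which cycles occur, and in each case writes down an explicit word such as $\underbrace{12\ldots12}_{c_{(12)}}\underbrace{13\ldots13}_{c_{(13)}}\underbrace{123\ldots 123}_{c_{(123)}}\underbrace{132\ldots132}_{c_{(321)}}1\underbrace{23\ldots23}_{c_{(23)}}23$ whose state graph realizes the collection, invoking symmetry for the remaining configurations. You instead overlay the collection into a directed multigraph and appeal to the Eulerian-trail criterion, verifying the two hypotheses: the degree condition (which you correctly trace to the single optional path contributing the one permitted out-/in-excess pair) and weak connectivity (which you correctly isolate as the only place the three-state restriction is used, via the pigeonhole observation that any two pieces span overlapping vertex sets in $\{1,2,3\}$). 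Your closing remark --- that the orientation hypotheses are irrelevant to the existence of the trail and only serve to make the constructed variable's canonical decomposition (unique by Proposition~\ref{prop:basicgraph}) coincide with the input collection --- is a point the paper leaves implicit. What the paper's approach buys is self-containedness and explicit witness words; what yours buys is the elimination of the case analysis and a clear diagnosis of exactly which hypothesis does which job. Both arguments are sound.
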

\begin{proof}
For each cycle $\alpha$, let $c_{\alpha}$ be the number of copies ofthe cycle $\alpha$ in the collection. The goal is to construct a word realizing the decomposition as a state graph of a variable. There are four different cases depending on what kind of path and cycles that occur in the decomposition. 

\textsc{Case 1.}
If the path $123$ occurs in the collection, then the word
\[\underbrace{1212\ldots12}_{c_{(12)}}\underbrace{ 1313\ldots13}_{c_{(13)}}\underbrace{123123\ldots 123}_{c_{(123)}}\underbrace{ 132132\ldots132} _{c_{(321)}}1\underbrace{2323\ldots23}_{c_{(23)}}23\]
realizes the collection as the state graph of a variable. The numbers under the brackets denote the number of times the small subword is repeated. For example, 
\[\underbrace{ 132132\ldots132} _{c_{(321)}}\]
represents $c_{(321)}$ copies of the word $132$.

\textsc{Case 2.}
If the path $12$ occurs, then the collection is realized by the word:
\[\underbrace{1212\ldots12}_{c_{(12)}}\underbrace{ 1313\ldots13}_{c_{(13)}}\underbrace{123123\ldots 123}_{c_{(123)}}\underbrace{ 132132\ldots132} _{c_{(321)}}1\underbrace{2323\ldots23}_{c_{(23)}}2.\]

\textsc{Case 3.}
If the collection only consists of cycles, and at least one of them is $(12)$, then the word
\[\underbrace{123123\ldots 123}_{c_{(123)}}\underbrace{ 132132\ldots132} _{c_{(321)}}\underbrace{ 1313\ldots13}_{c_{(13)}}1\underbrace{2323\ldots23}_{c_{(23)}}\underbrace{2121\ldots21}_{c_{(12)}}\]
gives a realization.

\textsc{Case 4.}
If the collection only consists of triangles, then the word
\[\underbrace{123123\ldots 123}_{c_{(123)}}\underbrace{ 132132\ldots132} _{c_{(321)}}1\]
gives a realization.

By symmetry, this proves that collections with $T-1$ edges come from words.
\end{proof}
\begin{ex}
The word $123231323123$ has the decomposition $(13)(23)(23)(123)123$.
\end{ex}
Since the state graph can be reconstructed from its decomposition, there is one variable associated to each decomposition. If $x,x',y,y'$ are variables from the decompositions $A,A',B,B'$ and $xy-x'y'\in I_{S,T}$, then we get the \emph{Markov step}

\[
\left[\begin{array}{c}A\\
B
\end{array}\right]\rightarrow\left[\begin{array}{c}A'\\
B'
\end{array}\right]
.\] The convention is that only the parts of the decomposition that is changed is written out, that is, the step

\[
\left[\begin{array}{c}AC\\
BD
\end{array}\right]\rightarrow\left[\begin{array}{c}AC'\\
BD'
\end{array}\right]
\] 
is written as
\[
\left[\begin{array}{c}C\\
D
\end{array}\right]\rightarrow\left[\begin{array}{c}C'\\
D'
\end{array}\right].
\]
After a step it might  be necessary to decompose the graphs in a new way. For example,
\[
\left[\begin{array}{c}(123)\\
(321)
\end{array}\right]\rightarrow\left[\begin{array}{c}(321)\\
(123)
\end{array}\right]
\]
could give
\[
\left[\begin{array}{c}(123)\\
(321)(321)
\end{array}\right]\rightarrow\left[\begin{array}{c}(321)\\
(12)(13)(23)
\end{array}\right].
\]
\begin{re}
The order of the cycles and paths in the decompositions does not matter. For example, $(12)(23)13$ is the same decomposition as $13(23)(12)$.
\end{re}

\begin{definition}
Let
\[
I^\le_{3,T}=\left\langle    b\in I_{3,T}\mid    \textrm{$b$ is a quadratic binomial whose Markov move changes at most $12$ edges}          \right\rangle.
\]

\end{definition}
The following normal form for monomials is useful.

\begin{definition}
A monomial $n$ is \emph{normal}, if
\begin{itemize}
\item[(1)]
all triangles in $n$ are oriented the same way;
\item[(2)]
and if two variables divide $n$, then the number of triangles in them differ by at most two;
\item[(3)]
and either
\begin{itemize}
\item[(a)]
all triangles and paths are oriented in the same way,
\item[(b)]
or there is at most one triangle in each variable in $n$, and there is no monomial $n'$ satisfying
\begin{itemize}
\item[(i)]
$n'-n\in I^\le_{3,T}$,
\item[(ii)]
and $n'$ has fewer triangles than $n$, or $n'$ has equally many triangles but fewer paths than $n,$
\end{itemize}
that is normal.
\end{itemize}
\end{itemize}
\end{definition}
\section{Proof of the main theorem}
To prove that the ideals $I_{3,T}$ are generated by quadrics, five lemmas are needed.
\begin{lemma}\label{lemma:main}
From a monomial $m$, it is possible by degree two-moves to reach a normal monomial $n$.
\end{lemma}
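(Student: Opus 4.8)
The plan is to reach a normal monomial from $m$ by a sequence of degree-two Markov moves that reduces, in a well-chosen ordering, the ``complexity'' of the monomial. First I would fix orientations: using the two moves
\[
\left[\begin{array}{c}(123)\\
(321)
\end{array}\right]\rightarrow\left[\begin{array}{c}(12)\\
(13)(23)
\end{array}\right]
\quad\textrm{(possibly re-decomposed after applying)}
\]
and the analogous move turning an oppositely-oriented path-plus-triangle into two-cycles plus a path, I would eliminate all but possibly one orientation among the triangles, and then among the paths, so that condition (1) holds and we are heading toward (3a) or (3b). This mirrors the replacement step already used in the proof of Proposition \ref{prop:basicgraph}: oppositely oriented triangles get traded for triples of two-cycles.

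Next I would deal with condition (2), the ``balancing'' of triangle counts. If two variables $x,y$ dividing $m$ have triangle counts differing by three or more, I would move a triangle from the richer decomposition to the poorer one. Concretely, if $x$ contains a triangle $(123)$ and $y$ contains at least three two-cycles or a suitable path, one applies a quadratic binomial of $I^\le_{3,T}$ that shifts the triangle across; because the total number of edges in $x$ and the total in $y$ are each fixed (they are $T-1$), by Proposition \ref{prop} the resulting pair of decompositions again corresponds to honest variables. Each such move strictly decreases the spread of triangle counts, so after finitely many moves condition (2) is satisfied, and one checks the orientation-fixing of the first step is preserved (or can be re-run).

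Finally, for condition (3): if after the above all triangles and paths share an orientation we are in case (3a) and done. Otherwise I would invoke minimality directly — among all monomials $n'$ reachable from $m$ by degree-two moves, with at most one triangle per variable, that satisfy (1) and (2), pick one minimizing first the number of triangles and then the number of paths. Any such minimizer is normal by definition of (3b), because a violating $n'$ would itself be reachable by degree-two moves (composing the move $m\to n$ with $n'-n\in I^\le_{3,T}$) and would contradict minimality. The main obstacle I anticipate is the interaction between the three reduction steps: a move fixing orientation or reducing triangle count can re-introduce a path or an extra triangle of the wrong orientation, so the argument needs a single monovariant — most likely the lexicographic pair (number of triangles, number of paths), together with a bound on the number of ``wrong-orientation'' objects — that decreases under every move used, or a careful ordering of the phases so that later phases do not disturb invariants established earlier. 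Verifying that such a monovariant exists, and that every needed binomial indeed lies in $I^\le_{3,T}$ (i.e.\ changes at most $12$ edges), is where the real work lies.
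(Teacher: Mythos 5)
Your overall strategy---first make all triangles agree in orientation, then balance the triangle counts between variables, then minimize to force Condition 3---is exactly the strategy of the paper, and your balancing step and final minimization match it (the paper is equally brief about the minimization, simply applying moves from $I^\le_{3,T}$ to decrease the pair (number of triangles, number of paths)). However, there are concrete gaps in the first phase. The move
\[
\left[\begin{array}{c}(123)\\
(321)
\end{array}\right]\rightarrow\left[\begin{array}{c}(12)\\
(13)(23)
\end{array}\right]
\]
is not a legal move in this model: each variable must keep exactly $T-1$ edges in its state graph, and this transfers an edge from the first variable to the second. The legal move is the \emph{swap} of the two triangles, and it reduces the triangle count only because, when one of the two variables carries a second triangle, that variable ends up containing both $(123)$ and $(321)$, which re-decompose into three two-cycles. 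When every variable has exactly one triangle the swap accomplishes nothing, and this is precisely the case your proposal does not cover; the paper needs two further tailored moves here (one exploiting a path sitting in the same variable as a triangle, and one, for the no-path case, splitting both triangles into a two-cycle plus a one-edge path). Similarly, in the balancing phase one cannot move ``a triangle'' across for parity reasons; the correct move exchanges two triangles for three two-cycles.

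The second gap is the one you flag yourself: getting down to at most one triangle per variable before invoking minimality, and proving termination. The paper does real work here, using the parity of $T$ to pin down the length of the offending oppositely-oriented path and exhibiting, in each remaining configuration, a specific legal move that strictly decreases the number of triangles (or the number of paths); the monovariant is exactly the lexicographic pair you guess at, but showing that some move in $I^\le_{3,T}$ always decreases it is the substance of the lemma rather than a verification to be deferred. As written, the proposal is the right plan with the hardest cases left open.
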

\begin{proof}
The first step is to get all the triangles of $m$ oriented the same way.

If the monomial $m$ has variables with triangles oriented differently, and a variable with more than one triangle, then the move \[
\left[\begin{array}{c}(123)\\
(321)
\end{array}\right]\rightarrow\left[\begin{array}{c}(321)\\
(123)
\end{array}\right]
\]decreases the number of triangles, since $(123)(321)$ becomes $(12)(13)(23)$ in the decomposition. After possible repetitions, either all triangles have the same orientation or the variables have at most one triangle. If there is a pair of variables with opposite oriented triangles, then there are two cases depending on if anyone of them have a path. If one of them have a path $P$, then it has the triangle, say $(123)$, with the same orientation. The step \[
\left[\begin{array}{c}(123)P\\
(321)
\end{array}\right]\rightarrow\left[\begin{array}{c}(321)P\\
(123)
\end{array}\right],
\]
reduce the number of triangles. If neither of them have a path, then the move \[
\left[\begin{array}{c}(123)\\
(321)
\end{array}\right]\rightarrow\left[\begin{array}{c}(12)31\\
(23)13
\end{array}\right]
\] reduces the number of triangles. Now all triangles can be assumed to have the same orientation, and Condition 1 is satisfied.

The second step is to reduce the difference between the number of triangles in the variables. If one variable contains at least three more triangles $T_1,$ $T_2,$ $T_3$ than another variable, then the other one contains at least three two-cycles $C_1,$ $C_2$ and $C_3$. The move
\[
\left[\begin{array}{c}T_1T_2\\
C_1C_2C_3
\end{array}\right]\rightarrow\left[\begin{array}{c}C_1C_2C_3\\
T_1T_2
\end{array}\right]
\]
reduces the difference. After repetitions, Condition 2 is satisfied.

If all paths and triangles have the same orientation, then the monomial is normal, since Condition 3.a is satisfied.

To show that Condition 3.b is satisfied we first find moves to a monomial with at most one triangle in each variable.

If there are no triangles, then we are done. Otherwise,  there is a triangle and a path $P$ with opposite orientations. The path $P$ is not in a variable with  a triangle, and thus no variable contains more than two triangles due to that Condition 2 is satisfied. If there is a variable with two triangles and a path $Q$, then $P$ and $Q$ are equally long, and the step
\[
\left[\begin{array}{c}P\\
Q
\end{array}\right]\rightarrow\left[\begin{array}{c}Q\\
P
\end{array}\right]
\]
reduce the number of triangles. From now on, we assume that no variables with two triangles have a path. If there are no variables with two triangles and no paths, then all variables have at most one triangle. Now assume that there is a variable with two triangles, say $(123)$, and no path. By parity the path $P$ have two edges, and is of opposite orientation, say $321$. The move 
\[
\left[\begin{array}{c}321\\
(123)
\end{array}\right]\rightarrow\left[\begin{array}{c}(12)\\
(23)31
\end{array}\right]
\]
reduces the number of triangles. This procedure can be repeated as long as there are variables with more than one triangle, and there are triangles and paths of opposite orientation. Thus, we either get everything oriented in the same way, and satisfy Condition 3.a, or get at most one triangle in each variable. Using quadratic Markov moves changing at most $12$ edges, minimize according to satisfy Condition 3.b.
\end{proof}
\begin{definition}
The \emph{$ij$-spin} of a variable $x$ is $s_{ij}(x)=c_{ij}-c_{ji}$ where $c_{kl}$ is the number of $kl$ edges in the state graph of $x$. The \emph{$ij$-spin} of a monomial $m=x_1\cdots x_d$ is $s_{ij}(m)=\Sigma_{k=1}^ds_{ij}(x_k)$. The \emph{spin-vector} of a monomial $m$ is
the vector $(s_{12}(m),s_{31}(m),s_{23}(m))$. The \emph{total spin} of a monomial $m$ is $s_{12}(m)+s_{23}(m)+s_{31}(m)$.
\end{definition}
Although there are homotopy and valuation theoretic interpretations of spin, we only use it for combinatorial calculations.
\begin{lemma}\label{lemma:direct}
Let $m$ and $n$ be normal monomials with $m-n\in I_{3,T}$. If $m$ has all paths and triangles oriented the same way, then so does $n$.
\end{lemma}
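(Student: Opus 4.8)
The plan is to argue by invariance of the spin-vector under the homomorphism $\Phi_{3,T}$ and by parity/counting constraints. First I would observe that $m-n\in I_{3,T}$ forces $\Phi_{3,T}(m)=\Phi_{3,T}(n)$, and from this deduce that $m$ and $n$ have the same spin-vector $(s_{12},s_{31},s_{23})$ and the same total number of edges $d(T-1)$, where $d$ is the common degree; indeed the multiset of directed edges counted with multiplicity is exactly the exponent vector of the image monomial in $R_{3,2}$, so every $c_{ij}$ summed over the whole monomial is preserved, hence each $ij$-spin and the total edge count are preserved. The total spin of $m$ then records, up to the contribution of two-cycles (which contribute $0$ to every spin) and of at most one leftover path per variable, essentially three times the number of same-oriented triangles minus three times the number of oppositely oriented triangles; since by hypothesis $m$ has everything oriented the same way, $m$ has no oppositely oriented triangles and the total spin of $m$ is as large as possible given the edge count.

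The key step is then to show that \emph{any} normal monomial $n$ with $n-m\in I_{3,T}$ must also achieve this extremal total spin, and that extremality of the total spin combined with normality (Conditions 1, 2, 3) forces all paths and triangles of $n$ to point the same way. For this I would write the total spin of a normal monomial as a function of the numbers $c_{(123)},c_{(321)},c_{(12)},c_{(13)},c_{(23)}$ of cycles and of the lengths and orientations of the at most one path per variable, across all $d$ variables. A two-cycle contributes $0$; a triangle $(123)$ contributes $+3$ and $(321)$ contributes $-3$; a path contributes $0,\pm1,$ or $\pm2$ depending on its type, and a path and a triangle of opposite orientation in the decomposition can always be recombined (exactly the moves used in Lemma~\ref{lemma:main}, e.g. $(123)$ together with a $321$ path) to lower the triangle count and raise the total spin unless they already agree. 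So if $n$ had a triangle and a path of opposite orientation, or a triangle $(321)$, one could strictly increase its total spin by a degree-two move changing at most $12$ edges --- but since the total spin is invariant under $I_{3,T}$ and $n$ is assumed normal (in particular minimal in the sense of Condition 3.b when not already in case 3.a), this is impossible. Hence $n$ falls under Condition 3.a, i.e. all its paths and triangles are oriented the same way, which is exactly the claim.

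I expect the main obstacle to be the bookkeeping in the middle step: verifying that in a normal monomial the only way to have the extremal total spin (equivalently, the minimal triangle count compatible with the fixed spin-vector and edge count) is to have a fully one-way decomposition, and in particular ruling out configurations where, say, one variable carries a $(321)$ triangle while another carries compensating $(123)$'s, without any available quadratic reduction. Here I would lean on Condition 2 (triangle counts in any two variables differ by at most two) to bound how much opposite-orientation mass can be hidden, and on the recombination identities (two oppositely oriented triangles become three two-cycles; a triangle plus an opposite two-edge path becomes a two-cycle plus a two-edge path of matching orientation) to produce the contradicting move whenever a mismatch survives. Once those identities are in hand, the argument is a finite case check over the possible path types per variable, and the conclusion follows.
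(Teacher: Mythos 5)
Your overall strategy---pair the invariance of the spins under $I_{3,T}$ with quadratic moves that would violate normality---is the same one the paper uses, but two of your central steps do not work as stated. First, a degree-two move cannot ``raise the total spin'': each $s_{ij}$, and hence the total spin, is a linear function of the exponent vector of $\Phi_{3,T}(m)$ (as you note yourself in your first paragraph), so it is constant on the whole fiber and no move in $I^{\le}_{3,T}$ changes it. What the recombination identities actually accomplish is a reduction of the number of triangles, or of paths, at fixed spin; the contradiction must therefore be with Condition 3.b of normality, not with spin growth. Second, and more seriously, your extremality claim (``the total spin of $m$ is as large as possible given the edge count'') is not meaningful: the total spin is determined by the multiset of directed edges, and a mixed-orientation $n$ can match the total spin of a one-way $m$. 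For instance a $321$ path contributes $-2$ while $231$ and $312$ paths contribute $+2$ each, so a monomial containing one of each has the same total spin as a monomial with a single $123$ path and two-cycles; total spin alone cannot separate them.

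This is precisely why the paper's proof is a four-way case analysis rather than a single extremality argument. Only in one case ($T$ even and $n$ triangle-free) does the total spin suffice. In the other cases the paper must first use explicit quadratic moves to pin down which path types can coexist in a normal $n$ (e.g.\ $123$ cannot occur next to $321$, every surviving $(321)$-oriented path forces companion paths $231$ and $312$, variables with triangles are forced to carry one specific short path), and only then apply a case-specific linear functional of the spin vector---$s_{31}-s_{12}-s_{23}$, or $s_{23}+s_{31}-3s_{12}-d$, or $s_{12}-s_{31}+s_{23}$---chosen so that every admissible variable of $n$ violates an inequality that every variable of $m$ satisfies. That finite classification of surviving path types, together with the choice of separating functionals, is the real content of the lemma; it is exactly the part you defer to ``bookkeeping,'' and the total-spin framework you set up cannot complete it.
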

\begin{proof}
The proof is divided into four cases: Whether or not $n$ has triangles in any of the variables and parity of $T$. By symmetry, we can assume $m$ has orientation $(123)$. For contradiction, assume that $n$ has a path or triangle with orientation $(321)$.

\textsc{Case A.}
There are no triangles in $n$.

\textsc{Case A.1.}
Let $T$ be even.

By assumption, $n$ contains a path with $(321)$ orientation. This path has an odd number of edges as there is an odd number of edges in the state graph of each variable. Every variable in $m$ has a triangle or a path. In $m$, everything has the same orientation, so the total spin is at least the degree of $m$. In $n$, all variables contribute $1$ or $-1$ to the total spin, and at least one of the variables contributes $-1$. Thus, $n$ has total spin strictly less than its degree, contradicting that $m$ and $n$ have the same total spin.

\textsc{Case A.2.}
Let $T$ be odd.

By assumption, $n$ contains a path with $(321)$ orientation. This path has an even number of edges as there is an even number of edges in the state graph of the variable. By symmetry, we can assume $n$ contains the path $321$. Now, $n$ cannot contain the path $123$ since $n$ is normal and that would allow a move reducing the number of paths:
\[
\left[\begin{array}{c}321\\
123
\end{array}\right]\rightarrow\left[\begin{array}{c}(12)\\
(23)
\end{array}\right]
.\]

 To cancel the negative $23$-spin from a $321$ path, $n$ must contain a $23$ edge outside a two-cycle. That edge must be in a path since there are no triangles in $n$. The only path of the right length and orientation is $231$ since $123$ is excluded. The same argument for $12$ gives that $n$ contains a $312$ path for every path $321$. Since $n$ is normal, the only type of path oriented as $(321)$ in $n$ is $321$. All spin in $n$ is from the paths $312,231,312$, so the $31$-spin is strictly greater than the sum of the $12$-spin and $23$-spin in $n$. The only way to get a similar contribution to $31$-spin in $m$ is from $31$ paths without triangles, but that is impossible since $m$ is normal and $T$ is odd.

\textsc{Case B.}
There are triangles in $n$.

\textsc{Case B.1.}
Let $T$ be even.

If all triangles in $n$ have orientation $(321),$ then there are variables with paths $12,23$ and $31$ in $n$. If there is a variable in $n$ with a triangle and no path, then the move
\[
\left[\begin{array}{c}(321)\\
12
\end{array}\right]\rightarrow\left[\begin{array}{c}(12)32\\
13
\end{array}\right]
\]
reduces the number of triangles in $n$, contradicting normality. If there is a path on the variable with a triangle, then moves of the type
\[
\left[\begin{array}{c}321\\
12
\end{array}\right]\rightarrow\left[\begin{array}{c}(12)\\
32
\end{array}\right]
\]
reduce the number of paths. 

Next, we consider the case that all triangles in $n$ have orientation $(123)$. By assumption, there is a variable in $n$ with a $(321)$ oriented path. This variable does not contain a triangle since it is oriented differently and $n$ is normal. This is a single edge path since $T$ is even. By symmetry, let the path be $21$. If $n$ contains the path $123$, then the move
\[
\left[\begin{array}{c}21\\
123
\end{array}\right]\rightarrow\left[\begin{array}{c}23\\
(12)
\end{array}\right]
\]
reduces the number of paths, contradicting normality. If $n$ contains a variable with a triangle and no path, then the move 
\[
\left[\begin{array}{c}21\\
(123)
\end{array}\right]\rightarrow\left[\begin{array}{c}23\\
(12)31
\end{array}\right]
\]
reduces the number of triangles, again contradicting normality.
The variables in $n$ with triangles have exactly one triangle since $n$ is normal. Furthermore, they have a path on two edges since $T$ is even. The only such path with correct orientation is $231$. The path $12$ cannot occur in $n$ since the move
\[
\left[\begin{array}{c}12\\
231
\end{array}\right]\rightarrow\left[\begin{array}{c}31\\
123
\end{array}\right]
\]
creates a variable with the path $123$, yielding a contradiction as earlier. If $n$ contain a $32$ path, then the move
\[
\left[\begin{array}{c}32\\
231
\end{array}\right]\rightarrow\left[\begin{array}{c}31\\
(23)
\end{array}\right]
\]
reduces the number of paths, contradicting normality. Likewise, the move
\[
\left[\begin{array}{c}13\\
231
\end{array}\right]\rightarrow\left[\begin{array}{c}23\\
(13)
\end{array}\right]
\]
contradicts that $n$ contains the path $13.$ The set of spin vectors \newline$(s_{12}(x),s_{31}(x),s_{23}(x))$ of variables $x$ potentially occurring in $n$ is:
\[\{(-1,0,0),(0,1,0),(0,0,1),(1,2,2)\},\]
and $(-1,0,0)$ occurs. The spins then satisfy $s_{23}(n)+s_{31}(n)-3s_{12}(n)-d>0$ where $d$ is the degree of $n$. The $d$ variables $x$ in $m$ all satisfy $s_{23}(x)+s_{31}(x)-3s_{12}(x)-1\le0$ since $m$ is normal and oriented $(123).$

\textsc{Case B.2.}
Let $T$ be odd.

First, we consider the case that all triangles are oriented $(321)$. Every variable with triangles has at most one triangle since $n$ is normal. With every triangle comes a single edge path since $T$ is odd. By symmetry, we assume that $13$ is a path in $n$. To get a non-negative $31$-spin, the edge $13$ is compensated by a path with $31$. This path is not in a variable with a triangle since they would have different orientation. The path with $31$ contains two edges since $T$ is odd. There are two options: $231$ and $312$.

However, the moves
\[
\left[\begin{array}{c}(321)13\\
312
\end{array}\right]\rightarrow\left[\begin{array}{c}(12)(13)\\
132
\end{array}\right]\textrm{ and }
\left[\begin{array}{c}(321)13\\
231
\end{array}\right]\rightarrow\left[\begin{array}{c}(12)(23)\\
213
\end{array}\right]
\]
reduce the number of triangles, contradicting that $n$ is normal.

Now, we consider the case of all triangles oriented $(123)$. By assumption, there is a path in $n$ with the orientation $(321)$ and this variable has no triangle since $n$ is normal. The path has two edges since $T$ is odd. By symmetry, we assume that $n$ contains the path $321$. All variables with triangles in $n$ have a path with one edge since $n$ is normal. If that path is not $31$, then the moves
\[
\left[\begin{array}{c}(123)12\\
321
\end{array}\right]\rightarrow\left[\begin{array}{c}(12)312\\
(23)
\end{array}\right]
\textrm{ or }
\left[\begin{array}{c}(123)23\\
321
\end{array}\right]\rightarrow\left[\begin{array}{c}(23)231\\
(12)
\end{array}\right]
\]
decrease the number of triangles, contradicting normality. Thus, all paths with the $(321)$ orientation have to be $321$ and all variables with triangles have the path $31$. The set of spin vectors $(s_{12}(x),s_{31}(x),s_{23}(x))$ of variables $x$ potentially occurring in $n$ is
\[\{(0,0,0),(0,1,1),(-1,0,-1),(1,1,0),(1,2,1)\},\]
 and $(-1,0,-1)$ occurs. The spins satisfy $s_{12}(n)-s_{13}(n)+s_{23}(n)<0$ while the variables $x$ in $m$ all satisfy $s_{12}(x)-s_{13}(x)+s_{23}(x)\ge 0$.
\end{proof}

\begin{lemma}\label{lemma:diff}
Let $m$ and $n$ be normal monomials with $m-n\in I_{3,T}$. The maximal number of triangles in a variable in $m$ cannot be less than the minimal number of triangles in a variable in $n$.
\end{lemma}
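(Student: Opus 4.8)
The plan is to argue by contradiction using the spin invariants, exactly in the spirit of Lemma~\ref{lemma:direct}. Suppose $m$ and $n$ are normal with $m-n\in I_{3,T}$, let $a$ be the maximal number of triangles in a variable of $m$, and let $b$ be the minimal number of triangles in a variable of $n$, and assume $a<b$. Since $m$ and $n$ have the same degree $d$, and since by Condition~2 the number of triangles in any two variables of a normal monomial differ by at most two, every variable of $m$ has between $a-2$ and $a$ triangles, while every variable of $n$ has at least $b\ge a+1$ triangles. In particular every variable of $n$ contains a triangle, so by Lemma~\ref{lemma:direct} (or directly by normality Condition~3) we may fix a common orientation, say $(123)$, for all triangles and paths in both $m$ and $n$; note that if $m$ has a variable with a triangle then $m$ satisfies Condition~3.a after the normalization, and if $m$ has no triangles at all then $a=0<b$ and we can still use the mixed-orientation analysis as in Lemma~\ref{lemma:direct}. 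The key point is that having more triangles forces a larger total spin, which the invariance of spin under $I_{3,T}$ forbids.

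More precisely, I would compare total spins. A variable with $t$ triangles (all oriented $(123)$) and a leftover path contributes $3t$ plus the spin of the path, which is $0$, $1$, or $2$ depending on the path's length, all non-negative when everything is oriented $(123)$. Thus the total spin of $m$ is at most $3a\cdot d$ roughly — more carefully, writing $t_i$ for the triangle count of the $i$-th variable of $m$, the total spin of $m$ is $\sum_i (3t_i + p_i)$ with $0\le p_i\le 2$ and $t_i\le a$, while the total spin of $n$ is $\sum_i (3t'_i + p'_i)$ with $t'_i\ge b\ge a+1$ and $p'_i\ge 0$. Hence the total spin of $n$ is at least $3(a+1)d$ while that of $m$ is at most $(3a+2)d$; since $3(a+1)d = 3ad+3d > 3ad+2d \ge$ total spin of $m$, this contradicts that $m-n\in I_{3,T}$ forces equal total spins. (One must double-check the extremal case $a=0$ separately, where $m$ may have mixed orientations; there $m$ has total spin between $-d$ and $2d$, whereas $n$ has total spin at least $3d$, still a contradiction.)

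The step I expect to be the main obstacle is handling the case where $m$ has no triangles, so that Condition~3.b rather than 3.a governs $m$ and the orientation of $m$'s paths need not agree with $(123)$; then the crude bound ``total spin $\le 2d$'' must be justified from normality — a path in a normal triangle-free monomial has spin in $\{-1,0,1,2\}$ depending on length and orientation, and one needs that not too many variables can have spin $2$ while simultaneously the triangle-free structure is forced. But since $n$ has total spin at least $3d$ and $m$ has at most $d$ variables each of absolute spin at most $2$ (the state graph of each variable has $T-1$ edges, so spin per variable is bounded by $T-1$, but the relevant uniform bound is $2$ for the decomposition into two-cycles and a single short path), the gap $3d$ versus $2d$ closes the argument. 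A secondary subtlety is that ``maximal in $m$'' versus ``minimal in $n$'' is not symmetric in $m$ and $n$, so one cannot simply invoke an already-proved symmetric statement; the spin comparison above is genuinely one-directional and that asymmetry is exactly what makes it work.
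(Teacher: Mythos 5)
Your proof is correct and takes essentially the same route as the paper: the heart of both arguments is the total-spin invariant, which gives spin at least $3(a+1)d$ for $n$ but at most $(3a+2)d$ for $m$ once both monomials are uniformly oriented, with the mixed-orientation possibility eliminated via normality (Condition 3.b together with Proposition~\ref{prop:basicgraph}) and Lemma~\ref{lemma:direct}. (The paper dispatches the mixed case even more cheaply, observing that both monomials then contain a triangle-free variable so the minimum in $n$ is zero and the claim is vacuous; also your parenthetical that a triangle in $m$ forces Condition 3.a is not right, but nothing in your argument depends on it.)
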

\begin{proof}
If $m$ has variables with different orientation, then so does $n$ by Lemma~\ref{lemma:direct}. In this case, both monomials have at most one triangle in each variable and both monomials have variables with no triangles.

Now, we consider the case that both monomials have all variables oriented the same way. The total spin is different for $m$ and $n$ if the maximal number of triangles in a variable in $m$ is less than the minimal number of triangles in a variable in $n$.
\end{proof}

\begin{lemma}\label{lemma:directed}
If $m$ and $n$ are normal monomials such that $m-n\in I_{3,T}$ and $m$ and $n$ have all paths and triangles oriented in the same way, then it is possible to use degree two-steps to go from $m$ to $n.$
\end{lemma}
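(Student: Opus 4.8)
The plan is to attach to the fiber of $\Phi_{3,T}$ through $m$ a single canonical monomial $\bar m$ and to show that both $m$ and $n$ can be transported to $\bar m$ by degree-two steps; concatenating the route from $m$ to $\bar m$ with the reverse of the route from $n$ to $\bar m=\bar n$ then proves the lemma. By the symmetry of $K_3$ (relabelling the three states) we may assume every path and triangle of $m$ --- and hence, by Lemma~\ref{lemma:direct}, of $n$ --- is oriented $(123)$. Since $m-n\in I_{3,T}$, the two monomials have the same degree $d$ and $\Phi_{3,T}(m)=\Phi_{3,T}(n)$, so they use equally many of each of the six directed edges $12,21,13,31,23,32$; call this information, together with $d$, the \emph{fiber data}. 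Describing each variable $x$ of such a monomial by its unique decomposition $(t_x;a_x,b_x,c_x;p_x)$, with $t_x$ triangles $(123)$, with $a_x,b_x,c_x$ two-cycles $(12),(23),(31)$, with $p_x\in\{\emptyset,12,23,31,123,231,312\}$, and with $3t_x+2(a_x+b_x+c_x)+|p_x|=T-1$, the fiber data fixes $\sum_x a_x$, $\sum_x b_x$, $\sum_x c_x$ and the three sums $\tau+P_{ij}$, where $\tau=\sum_x t_x$ and $P_{ij}$ counts the variables whose path contains an $ij$ edge. In particular $\tau=s_{12}(m)-P_{12}$, so $\tau$ is \emph{not} pinned down by the fiber: normal monomials in one fiber can have genuinely different triangle totals (Lemma~\ref{lemma:diff} and Condition~2 confine the per-variable triangle counts of $m$ and $n$ only to nearly, not exactly, the same window), which is precisely where the content of the lemma lies.

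I would build $\bar m$ in four stages, each a single $\Phi$-preserving quadratic move iterated until a canonical shape is reached; intermediate monomials need not be normal. \emph{First}, lower $\tau$ to its minimum $\tau_0$ over the fiber: since $P_{12}<P_{12}^{\max}$ whenever $\tau>\tau_0$, a triangle can be dissolved into path material, the prototype being
\[
\left[\begin{array}{c}(123)\,12\\(12)\end{array}\right]\rightarrow\left[\begin{array}{c}(12)\,123\\312\end{array}\right]
\]
together with its rotations and reflections, each such move lowering $\tau$ by one. \emph{Second}, once $\tau=\tau_0$ the triangle distribution over the variables is essentially forced, so at most a relabelling of variables realized by the ``two triangles for three two-cycles'' swap of Lemma~\ref{lemma:main} and its compositions is needed to put it in canonical order. \emph{Third}, shuffle the two-cycles between variables --- swapping a $(12)$ in one for a $(23)$ in another, and so on --- until they are packed greedily, all $(12)$'s before all $(23)$'s before all $(31)$'s. \emph{Fourth}, the leftover path data is then forced variable by variable by the edge budget and the parity link between $|p_x|$ and $t_x\bmod 2$ (the $T$ even/odd split of Lemma~\ref{lemma:direct}); order the paths canonically. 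The monomial $\bar m$ so produced depends only on the fiber data, hence $\bar m=\bar n$.

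The crux is the first stage: one must show that whenever $\tau>\tau_0$ there is a \emph{short} sequence of degree-two moves decreasing it. The obstruction is positional --- the triangle one wishes to dissolve may sit in a variable with no free path slot, or the two-cycles needed as exchange currency may be scattered over several variables --- so one first performs preparatory degree-two moves to collect a triangle, an empty path slot, and the requisite two-cycles into two adjacent variables, and then checks, running through the parity cases and the few possible shapes of a variable, that a dissolving move of the prototype's type is available, with its changes confined so as to stay inside $I^{\le}_{3,T}$. One also has to watch the interplay of stages one and two: a dissolve can disturb the triangle distribution when two-cycles are scarce, so in practice the rebalancing of stage two must be threaded through stage one rather than run afterwards. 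The remaining stages are routine bookkeeping, each driven by a single easily verified quadratic relation.
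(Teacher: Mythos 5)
Your strategy---transporting both $m$ and $n$ to a single canonical representative $\bar m$ of the fiber rather than, as the paper does, inducting on the degree by manufacturing a shared variable---is legitimate in principle, and you correctly locate the real content of the lemma in the fact that the total triangle count $\tau$ is not a fiber invariant (only $\tau+P_{ij}$ is). But as written the proposal has genuine gaps exactly at the points where the work lies. Stage one is the crux and you leave it as ``one then checks, running through the parity cases and the few possible shapes of a variable, that a dissolving move \ldots is available'': that check is the theorem. The paper's Case~B shows what it actually takes: one must first use Lemma~\ref{lemma:diff} to pin the per-variable triangle counts of $m$ and $n$ into interleaved windows $\{t,t+2\}$ and $\{t-1,t+1\}$, then rule out (by a spin argument) the configuration in which all short paths coincide, and only then does a dissolving move of your prototype's shape exist; none of this is routine, and your ``preparatory moves to collect a triangle, an empty path slot, and the requisite two-cycles'' are not exhibited.

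Stage four contains an outright false claim: the leftover path data is \emph{not} forced by the edge budget and the parity link. With the per-variable triangle and two-cycle counts fixed, the fiber data only constrains the path multiset linearly, and when paths of both lengths coexist the system is underdetermined: for instance the edge surplus $(1,1,1)$ on $(12,23,31)$ with one two-edge path and one one-edge path is realized by $123$ together with $31$, by $231$ together with $12$, and by $312$ together with $23$. Connecting these realizations by degree-two moves, and showing that the spin invariants exclude the configurations that cannot be connected, is precisely the content of the paper's Cases~A.2--A.4, each of which needs a separate argument (e.g.\ ``if $m$ does not contain $23$ or $231$, then every free $23$ edge lies in a $123$ path, so the $12$-spin strictly exceeds the $23$-spin, contradiction''). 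So while your canonical-form architecture could be made to work, the two places you defer to ``checking cases'' and ``routine bookkeeping'' are exactly the two places where the paper has to do nontrivial spin-based case analysis, and one of your bookkeeping claims is wrong as stated.
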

\begin{proof}
By symmetry, we can assume that the orientation of the monomials is $(123).$ The proof is structured as follows: We start with two monomials $m$ and $n$ of degree $d$ and no common variables. Then, a sequence of steps is presented from $m$ to $m'$ and from $n$ to $n'$ so that they share a variable. By induction on $d$, it is then possible to go between $m$ and $n$ by degree two steps. The base case is trivial. The induction step is split into two cases: A. There is a variable $x$ in $m$ and a variable $y$ in $n$ that have the same number of triangles; B. Otherwise.

\textsc{Case A.}
There is a variable $x$ in $m$ and variable $y$ in $n$ that have the same number of triangles.

By parity, there can not be paths in $x$ and $y$ of different lengths. This case is split into four different subcases: 1. The variables $x$ and $y$ have the same path; 2. The variables $x$ and $y$ have different one edge paths; 3. The variables $x$ and $y$ have different two edge paths; 4. The variable $x$ has no path and $y$ has a path with two edges.

\textsc{Case A.1.}
The variables $x$ and $y$ have the same path.

The state graphs of $m$ and $n$ are the same. All edges with orientation $(321)$ are in two-cycles since all variables in $m$ and $n$ have the orientation $(123)$. All two cycles contain an edge with orientation $(321)$. This proves that the collection of two cycles in $m$ is the same as the collection of two-cycles in $n$. Now it is possible to do a sequence of moves 
\[
\left[\begin{array}{c}(i_1i_2)\\
(i_3i_4)
\end{array}\right]\rightarrow\left[\begin{array}{c}(i_3i_4)\\
(i_1i_2)
\end{array}\right]
\]
to $m',n'$ that share a variable since any subcollection of two-cycles from $m$ and $n$ can be picked and moved to these variables.

\textsc{Case A.2.}
The variables $x$ and $y$ have different paths with one edge.

By symmetry, let the path on $x$ be $12$ and let the edge on $y$ be $23$. If the monomial $m$ contains the paths $23$ or $231$, then the moves
\[
\left[\begin{array}{c}12\\
23
\end{array}\right]\rightarrow\left[\begin{array}{c}23\\
12
\end{array}\right]\textrm{ or }\left[\begin{array}{c}12\\
231
\end{array}\right]\rightarrow\left[\begin{array}{c}23\\
312
\end{array}\right]
\]
create a monomial $m'$ that contains a variable $x'$ with the same number of edges as $y$ and the same path. This reduces to Case 1.

If $m$ does not contain $23$ or $231$, then any edge $23$ not in a two-cycle or a triangle is in a path $123$. In particular, the $12$-spin of $m$ and $n$ is strictly greater than the $23$-spin of $m$ and $n$. By a similar argument for $n$, the steps
\[
\left[\begin{array}{c}23\\
12
\end{array}\right]\rightarrow\left[\begin{array}{c}12\\
23
\end{array}\right]\textrm{ or }\left[\begin{array}{c}23\\
312
\end{array}\right]\rightarrow\left[\begin{array}{c}12\\
231
\end{array}\right],
\]
give either a reduction to Case 1 or that the $23$-spin of $n$ and $m$ is greater than the $12$-spin of $n$ and $m$, a contradiction.

\textsc{Case A.3.}
The variables $x$ and $y$ have different two edge paths.

By symmetry, let $x$ have the path $123$ and let $y$ have the path $231$. If $m$ contains the path $231$ or $n$ contains the path $123$, then Case 1 applies after a swap of paths. If both $m$ and $n$ contain the path $312$, then, after the moves 

\[
\left[\begin{array}{c}123\\
312
\end{array}\right]\rightarrow\left[\begin{array}{c}312\\
123
\end{array}\right]\textrm{ and }
\left[\begin{array}{c}231\\
312
\end{array}\right]\rightarrow\left[\begin{array}{c}312\\
231
\end{array}\right],
\]
Case 1 applies. If $n$ contains a path $12$ or $m$ contains the path $31$,  then the steps
\[
\left[\begin{array}{c}231\\
12
\end{array}\right]\rightarrow\left[\begin{array}{c}123\\
31
\end{array}\right]
\textrm{ and }
\left[\begin{array}{c}123\\
31
\end{array}\right]\rightarrow\left[\begin{array}{c}231\\
12
\end{array}\right]
\]
reduce to Case 1.

Neither $n$ nor $m$ contain $312$. If $n$ contains no $312$-path, then the $31$-spin of $n$ and $m$ is strictly greater than the $12$-spin of $n$ and $m$ while in $m$, all $31$-edges not in two-cycles are in triangles or paths containing $12$. This is a contradiction.

Similarly, if  $m$ contains no $312$,  then the $12$-spin of $m$ and $n$ is strictly greater than the $31$-spin of $m$ and $n$ while in $n$, all $12$-edges not in two-cycles are in triangles or paths containing $31$. This is also a contradiction.

\textsc{Case A.4.}
The variable $x$ has no path and $y$ has a path with two edges.

By symmetry, we can assume that $y$ has the path $123$. If $m$ contains any variable with a path on two edges, then swapping that path to $x$ with a two-cycle gives a reduction to Case 1 or 3. If $m$ contains a variable with a triangle and a path with one edge, then moves of type
\[
\left[\begin{array}{c}(i_1i_2)\\
(ijk)ij
\end{array}\right]\rightarrow\left[\begin{array}{c}ijk\\
(i_1i_2)kij
\end{array}\right]
\]
give a reduction to Case 1 or 3.

If $m$ contains a variable other than $x$ that has a triangle with no path, then the move
\[
\left[\begin{array}{c}(i_1i_2)\\
(ijk)
\end{array}\right]\rightarrow\left[\begin{array}{c}ijk\\
(i_1i_2)ki
\end{array}\right]
\]
gives a reduction to Case 1 or 3.

The remaining case is that all variables in $m$ except $x$ contain no triangles and no paths of length two. If $T$ is odd, then the other variables contain no paths and if $T$ is even, then the other variables have paths with one edge. The variable $x$ contains fewer edges outside two-cycles than $y$ and the other variables in $m$ contain the lowest possible number of edges outside two-cycles. Thus, the total spin of $m$ and $n$ cannot be the same, a contradiction.

\textsc{Case B.}
There are no variables in $m$ and $n$ that have the same number of variables.

By Lemma~\ref{lemma:diff} and symmetry, we have that for some integer $t$: The variables in $n$ have $t$ or $t+2$ triangles; the variables in $m$ have $t-1$ or $t+1$ triangles; there are variables with $t,t+1$, and $t+2$ triangles. By parity, there are two subcases: 1. The variables in $m$ have paths with one edge; 2. The variables in $y$ have paths with one edge.

\textsc{Case B.1.}
The variables in $m$ have paths with one edge.

If all paths on the variables are the same, then this edge would get $d$ higher spin than any other edge. This is a contradiction because the paths in $n$ all have two edges and the edge spins are more evenly distributed. Thus, there are different paths in $m$. By symmetry, we can assume the paths are $12$ and $23$. The move
\[
\left[\begin{array}{c}(123)12\\
(i_1i_2)23
\end{array}\right]\rightarrow\left[\begin{array}{c}(i_1i_2)123\\
(123)
\end{array}\right]
\]
almost gives a reduction to Case A. It is possible that the new monomial needs to be normalized first by swapping two triangles for three two-cycles, but then Case A applies. If $12$ is not on a variable with a triangle to start off with, we can swap it that way.

\textsc{Case B.2.}
The variables in $n$ have paths with one edge.

The same type of argument as in Case 1 applies, with $m$ and $n$ switched.
\end{proof}

\begin{lemma}\label{lemma:undirected}
If $m$ and $n$ are normal monomials, both with paths and triangles in different orientations, and if $m-n\in I_{3,T}$, then it is possible to go from $m$ to $n$ using degree two steps.
\end{lemma}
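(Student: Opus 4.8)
The plan is to imitate the inductive scheme of Lemma~\ref{lemma:directed}: we induct on the common degree $d=\deg m=\deg n$, and in the induction step we exhibit degree two moves $m\to m'$ and $n\to n'$ after which $m'$ and $n'$ have a variable in common; removing that variable and invoking the induction hypothesis then finishes the proof, the base case $d\le 1$ being trivial since a fiber of degree one is a single monomial. By the symmetry of the set-up under permutations of the three states (even permutations preserve, odd permutations reverse, the cyclic orientation, and both preserve the family of normal monomials and the ideal $I^{\le}_{3,T}$) we may assume that every triangle of $m$ is oriented $(123)$; a short argument with the spin vector, in the style of Lemmas~\ref{lemma:main} and \ref{lemma:direct} and using that $m$ and $n$ have the same spin vector, shows that every triangle of $n$ is oriented $(123)$ as well. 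Since neither monomial satisfies Condition 3.a, each contains a variable carrying a path oriented $(321)$, which by Proposition~\ref{prop:basicgraph} is triangle-free, and (under the hypothesis) each contains a variable carrying a $(123)$-triangle; every variable carries at most one triangle, so the triangle counts of the variables lie in $\{0,1\}$ and Lemma~\ref{lemma:diff} is automatically satisfied here.

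The heart of the argument is a classification of the variables that can occur in a normal monomial of type 3.b, in the spirit of — and extending — the case analysis in the proof of Lemma~\ref{lemma:direct}. Normality forbids any quadratic move changing at most twelve edges that lowers the number of triangles, or that lowers the number of paths at equal triangle count. Using the annihilation moves that turn two oppositely "pointing" paths into two two-cycles (as in Case A of Lemma~\ref{lemma:direct}) and the triangle-reducing moves that take a $(123)$-triangle variable together with a $(321)$-path variable to triangle-free variables (as in Lemma~\ref{lemma:main} and Case B of Lemma~\ref{lemma:direct}), together with the parity constraint that a triangle-carrying variable has a path of length $1$ ($T$ odd) or length $0$ or $2$ ($T$ even), one shows: all $(321)$-paths present are of a single cyclic type; the $(123)$-triangle variables are forced to carry one prescribed short path (for instance, for $T$ odd with $(321)$-paths of type $321$, the path $31$); and the $(123)$-oriented paths are restricted to those that do not annihilate against a $(321)$-path. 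The outcome is that the variables of $m$, and likewise of $n$, are confined to a short explicit list distinguished only by their triangle and path — for the case just mentioned this is exactly the list of spin vectors $\{(0,0,0),(0,1,1),(-1,0,-1),(1,1,0),(1,2,1)\}$ appearing in the proof of Lemma~\ref{lemma:direct}. Equality of the spin vectors and of the degrees of $m$ and $n$ then determines every one of these type-counts except the number of triangle-carrying variables, which remains free within a bounded range.

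The remaining ambiguity is removed by a single family of degree two restructuring moves: a $(123)$-triangle variable together with a two-cycle-only variable can be traded, with a valid degree two move of $I_{3,T}$, for two path-carrying variables (for the sample case, a variable with path $231$ and one with path $312$), and this move changes the number of triangle-carrying variables by one while preserving all the other type-counts and the total multiset of two-cycles. Applying it repeatedly brings $m$ and $n$ to have the same multiset of combinatorial types, and hence the same multiset of two-cycles; at that point one finishes exactly as in Lemmas~\ref{lemma:directed} and \ref{lemma:main}, rearranging two-cycles among the variables by swaps of the form exchanging $(ij)$ and $(kl)$ until $m$ and $n$ acquire a common variable, and then applying the induction hypothesis in degree $d-1$. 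The remaining case, in which $m$ and $n$ are triangle-free but still have paths of both orientations, is handled the same way and is simpler, because then every variable is a collection of two-cycles together with at most one short path and the path content is again pinned down by the spin vector.

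The main obstacle is the classification of the middle paragraph. It is a moderate case analysis — the parity of $T$, times the cyclic type of the $(321)$-paths present, times whether triangles occur — and in each case one must verify both that no quadratic move in $I^{\le}_{3,T}$ (changing at most twelve edges, with both output variables having exactly $T-1$ edges) reduces the number of triangles or of paths for the listed variable shapes, and, conversely, that any shape outside the list does admit such a reducing move. The delicate point is the edge-count bookkeeping: for the listed shapes the parity of $T-1$ obstructs every candidate repackaging into fewer triangles, so that the presence of $(321)$-paths in fact forces the surviving triangles to persist. Once this classification and the resulting agreement of the combinatorial types of $m$ and $n$ are established, the rest is the same manipulation with two-cycles already used in the earlier lemmas.
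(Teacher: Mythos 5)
Your proposal follows essentially the same route as the paper: use normality (via triangle- and path-reducing quadratic moves) to pin down the short list of variable shapes that can occur in each parity/orientation case, match the type-counts of $m$ and $n$ through their common spin vector and degree, and finish by redistributing two-cycles with degree two swaps. The one real deviation --- keeping the all-two-cycle variable in your list and repairing the resulting one-parameter ambiguity in the triangle count with a restructuring move --- is handled more directly in the paper, which observes that a triangle variable together with a two-cycle-only variable already admits a triangle-reducing move contradicting normality, so that type $(0,0,0)$ is excluded and the counts are fully determined without any extra adjustment.
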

\begin{proof}
Divide into four cases: By parity of $T$, and whether or not $m$ has triangles in any of the variables. 

\textsc{Case A.}
Let $T$ be even.

\textsc{Case A.1.}
There are triangles in $m$.

By symmetry, let the triangles in $m$ have orientation $(123)$. By assumption, $m$ have a path of orientation $(321)$, and by parity and symmetry one of these paths is $21$. If a triangle has the paths $123$ or $312$, then the moves
\[
\left[\begin{array}{c}123\\
21
\end{array}\right]\rightarrow\left[\begin{array}{c}(12)\\
23
\end{array}\right]\textrm{ or }\left[\begin{array}{c}312\\
21
\end{array}\right]\rightarrow\left[\begin{array}{c}(12)\\
31
\end{array}\right]
\]
reduce the number of paths, contradicting normality. If there is a triangle with no path, then the move
\[
\left[\begin{array}{c}(123)\\
21
\end{array}\right]\rightarrow\left[\begin{array}{c}(12)23\\
31
\end{array}\right]
\]
reducse the number of triangles, again contradicting normality. Thus, every triangle in $m$ has the path $231$. We assumed that there is a triangle in $m$, so the path $231$ is in $m$. Using that path and moves similar to those above, we exclude the existence of paths $13$ and $32$. Furthermore, if the path $12$ exists, then the move
\[
\left[\begin{array}{c}231\\
12
\end{array}\right]\rightarrow\left[\begin{array}{c}123\\
31
\end{array}\right]
\]
leads to a contradiction, as one of the earlier moves demonstrated.

The only paths left are $21,23,31,$ and $231$. Thus, the only possible spin vectors of variables in $m$ are $(-1,0,0),(0,1,0),(0,0,1),$ and $(1,2,2)$ with $(1,2,2)$ and $(-1,0,0)$ occurring.

If $n$ has triangles, then the same argument gives six possible sets of spin vectors, due to symmetry break. However, only one of the six have the two last coordinates positive and larger than the first coordinate and that is the same as for $m$. The variables $m$ and $n$ contain the same types of paths and triangles. To match up the spins, they have to be equally many of each type. It suffices to swap two-cycles using degree two steps.

If $n$  does not  contain triangles, then $|s_{12}(n)|+|s_{23}(n)|+|s_{31}(n)|$ is at most the degree of $n$. From the list of paths in $m$, we know that the same expression for $m$ is larger than the degree of $m$, a contradiction.

\textsc{Case A.2.}
There are no triangles in $m$.

If there are triangles in $n$, then this is Case 1, so assume that $n$ does not contain triangles. Using the steps
\[\left[\begin{array}{c}(i_1i_2)k_1k_2\\
(j_1j_2)k_2k_1
\end{array}\right]\rightarrow\left[\begin{array}{c}(i_1i_2)j_1j_2\\
(k_1k_2)j_2j_1
\end{array}\right]\textrm{ and }\left[\begin{array}{c}(ij)\\
(k\ell)
\end{array}\right]\rightarrow\left[\begin{array}{c}(k\ell)\\
(ij)
\end{array}\right]
\]
we get from $m$ to $n$ by degree two moves.

\textsc{Case B.}
Let $T$ be odd.

\textsc{Case B.1.}
There are triangles in $m$.

By symmetry, the triangles have the orientation (123) and $m$ contains the path $321$. If $m$ contain a variable with the path $12$ or $23$, then the moves 
\[
\left[\begin{array}{c}321\\
(123)23
\end{array}\right]\rightarrow\left[\begin{array}{c}231\\
(12)(23)
\end{array}\right]\textrm{ or }
\left[\begin{array}{c}321\\
(123)12
\end{array}\right]\rightarrow\left[\begin{array}{c}312\\
(12)(23)
\end{array}\right]
\]
reduce the number of triangles, contradicting normality. All paths on variables with triangles have to be $31$. If the paths $132$ or $213$ occur, then the moves
\[
\left[\begin{array}{c}132\\
31
\end{array}\right]\rightarrow\left[\begin{array}{c}(13)\\
32
\end{array}\right]\textrm{ or }
\left[\begin{array}{c}213\\
31
\end{array}\right]\rightarrow\left[\begin{array}{c}(13)\\
21
\end{array}\right]
\]
reduce the number of paths, contradicting normality. All paths with orientation $(321)$ are $321$. If the path $123$ occurs, then the move
\[\left[\begin{array}{c}123\\
321
\end{array}\right]\rightarrow\left[\begin{array}{c}(12)\\
(23)
\end{array}\right]
\]
reduces the number of paths and contradicts normality. If there is a variable with no triangle and no path, then the move
\[\left[\begin{array}{c}(123)31\\
(ij)
\end{array}\right]\rightarrow\left[\begin{array}{c}(ij)312\\
231
\end{array}\right]
\]
reduces the number of triangles, contradicting normality. The paths that occur are $321,31,$ $231$ and $312$. Now proceed exactly as in Case A.1.
 
\textsc{Case B.2.}
There are no triangles in $m$.

If there are triangles in $n$, then this is Case 1, so assume that there are no triangles in $n$.

The steps
\[\left[\begin{array}{c}ijk\\
kji
\end{array}\right]\rightarrow\left[\begin{array}{c}(ij)\\
(jk)
\end{array}\right]
\]
give that the monomials cannot contain both orientations of a path.
If both the paths $123$ and $231$ are more common in $m$ than in $n$, then the $23$-spin cannot be equal for both monomials. Thus, there is at most one type of path in any orientation that is more common in $m$ than in $n$. By symmetry, there is one path in each orientation for which the monomials $m$ and $n$ have an equal number. These paths have one undirected edge $ij$ in common. The $ij$-spin gives that the other paths containing directed $ij$ are equally common. Then the total spin gives that both monomials $m$ and $n$ have the same number of each path. The collection of paths and two-cycles are the same. In this situation swapping two-cycles is enough to go between $m$ and $n$.
\end{proof}
\begin{theorem}
The three-state toric Markov chain model is generated by quadrics.
\end{theorem}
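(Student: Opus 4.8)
The plan is to obtain the theorem as a direct consequence of the five lemmas of this section, via the standard dictionary between toric ideals and moves on their fibers. Since $\Phi_{3,T}$ sends every variable of $R_{3,T}$ to a monomial of degree $T-1$, the ideal $I_{3,T}$ is homogeneous, and being the kernel of a monomial map it is a toric ideal, hence spanned over $\mathbb{K}$ by the binomials $m-n$ it contains. By the correspondence between generating sets of a toric ideal and connectivity of its fibers (see, e.g., \cite{DSS}), it therefore suffices to prove that whenever $m-n\in I_{3,T}$, the monomial $m$ can be carried to the monomial $n$ by a finite sequence of degree-two moves. Recall that each Markov step of Section~2 is such a move: it replaces one monomial by another, and the difference of the two monomials is a monomial multiple of the quadratic binomial $xy-x'y'\in I_{3,T}$ attached to the step. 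Hence a chain of degree-two moves from $m$ to $n$ certifies that $m-n$ lies in the ideal generated by the quadratic binomials of $I_{3,T}$.

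So let $m-n\in I_{3,T}$. First I would apply Lemma~\ref{lemma:main} to $m$ and, separately, to $n$, obtaining chains of degree-two moves $m\rightsquigarrow\bar m$ and $n\rightsquigarrow\bar n$ with $\bar m$ and $\bar n$ normal. Every degree-two move lies in $I_{3,T}$, so $m-\bar m$ and $n-\bar n$ lie in $I_{3,T}$, whence $\bar m-\bar n=(m-n)-(m-\bar m)+(n-\bar n)\in I_{3,T}$. Thus $\bar m$ and $\bar n$ are normal monomials of the same degree in a common fiber, and it is enough to connect \emph{them} by degree-two moves: concatenating $m\rightsquigarrow\bar m$, such a chain $\bar m\rightsquigarrow\bar n$, and the reverse of $n\rightsquigarrow\bar n$ then connects $m$ to $n$.

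It remains to connect $\bar m$ and $\bar n$, and here I split on orientation. Call a monomial \emph{uniform} if all its paths and triangles are oriented the same way. If $\bar m$ is uniform, then by Lemma~\ref{lemma:direct} so is $\bar n$ (and a comparison of total spins forces the common orientations to agree), so Lemma~\ref{lemma:directed} supplies a chain of degree-two moves $\bar m\rightsquigarrow\bar n$. If $\bar m$ is not uniform, then applying Lemma~\ref{lemma:direct} with the roles of $\bar m$ and $\bar n$ exchanged (legitimate since $\bar n-\bar m\in I_{3,T}$ and normality is symmetric) and taking its contrapositive shows that $\bar n$ is not uniform either, so Lemma~\ref{lemma:undirected} supplies the chain $\bar m\rightsquigarrow\bar n$. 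In either case $m$ and $n$ are connected by degree-two moves, so $I_{3,T}$ is generated by quadrics.

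I do not expect a real obstacle in this last argument: all of the difficulty sits in the five lemmas, most of it in the spin bookkeeping of Lemmas~\ref{lemma:direct}, \ref{lemma:diff}, \ref{lemma:directed}, and~\ref{lemma:undirected}. The only points that need a moment's care are that the normalization of Lemma~\ref{lemma:main} does not leave the fiber (immediate, since its moves are binomials in $I_{3,T}$) and that the orientation split is genuinely exhaustive — which it is, precisely because Lemma~\ref{lemma:direct} rules out the mixed possibility in which one of $\bar m,\bar n$ is uniform and the other is not.
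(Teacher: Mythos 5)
Your proposal is correct and follows essentially the same route as the paper: normalize both monomials via Lemma~\ref{lemma:main}, use Lemma~\ref{lemma:direct} to see that the two normal forms are either both uniformly oriented or both mixed, and then connect them with Lemma~\ref{lemma:directed} or Lemma~\ref{lemma:undirected} respectively. You merely spell out the standard fiber-connectivity dictionary and the contrapositive use of Lemma~\ref{lemma:direct} more explicitly than the paper does, which is fine.
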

\begin{proof}
Let $m',n'$ be two monomials with $m'-n'\in I_{3,T}$ and let $m,n$ be the corresponding normal monomials from Lemma~\ref{lemma:main}.
According to Lemma~\ref{lemma:direct}, either both $m$ and $n$ have everything oriented in the same way or they have paths and triangles oriented differently.
 It is possible to go between $m$ and $n$ by Lemma~\ref{lemma:directed} and Lemma~\ref{lemma:undirected}.
\end{proof}
\begin{re}
In the Markov moves introduced, at most $12$ edges were interchanged. This shows that the Markov moves, and the generating binomials, fall into a finite number of symmetry classes.
\end{re}
\noindent {\bf Acknowledgements.} I would like to thank Jan Draisma and Ruriko Yoshida for their comments and questions.

\end{document}